\theoremstyle{plain}
\newtheorem{thm}{Theorem}
\newtheorem{lem}{Lemma}
\theoremstyle{definition}
\theoremstyle{remark}
\begin{document}

\title[A stochastic integral of operator-valued functions]
      {A stochastic integral of operator-valued functions}

\author{Volodymyr Tesko}

\address{Institute of Mathematics, National Academy of
         Sciences of Ukraine, 3 Teresh\-chenkivs'ka, Kyiv, 01601,
         Ukraine}

\email{tesko@imath.kiev.ua}




\subjclass[2000]{Primary 46G99, 47B15, 60H05}

\date{16/01/2008}

\dedicatory{To Professor M. L. Gorbachuk on the occasion of his
70th birthday.}

\keywords{Stochastic integral,
          resolution of identity, normal martingale, Fock space.}


\begin{abstract}
      In this note we define and study a Hilbert space-valued stochastic integral of
      operator-valued functions with respect to Hilbert space-valued
      measures.
      We show that this integral generalizes the classical  It\^{o} stochastic integral
      of adapted processes with respect to normal martingales
      and the It\^{o} integral in a Fock space.
\end{abstract}

\maketitle


\section{Introduction}

Here  and subsequently, we fix a real number $T>0$. Let
$\mathcal{H}$ be a  complex Hilbert space, $M$ be a fixed vector
from $\mathcal{H}$ and $
  [0,T]\ni t\mapsto E_t
$ be a resolution of identity in $\mathcal{H}$. Consider the
$\mathcal{H}$-valued function  ({\it abstract martingale})
$$
   [0,T]\ni t\mapsto M_t:=E_tM\in\mathcal{H}.
$$
In this paper we construct and study an integral
\begin{equation}\label{eq.1.1}
      \int_{[0,T]}A(t)\,dM_t
\end{equation}
for a certain class of operator-valued functions $[0,T]\ni t\mapsto
A(t)$ whose values are linear operators in the space $\mathcal{H}$.
We define such an integral as an element of the Hilbert space
$\mathcal{H}$ and call it a {\it Hilbert space-valued stochastic
  integral} (or {\it $H$-stochastic integral}).
By analogy with the classical
integration theory we  first define integral (\ref{eq.1.1}) for a
certain class of simple operator-valued functions and then extend
this definition to a  wider class.

We illustrate our abstract constructions with a few examples. Thus, 
we show that the classical It\^{o} stochastic integral is a
particular case of the $H$-stochastic integral. Namely, let
$\mathcal{H}:=L^2(\Omega,\mathcal{A},P)$ be a space of square
integrable functions on a complete probability space
$(\Omega,\mathcal{A},P)$, $\{\mathcal{A}_t\}_{t\in[0,T]}$ be a
filtration satisfying the usual conditions and $\{N_t\}_{t\in[0,T]}$
be a normal martingale on $(\Omega,\mathcal{A},P)$ with respect to
$\{\mathcal{A}_t\}_{t\in[0,T]}$, i.e.,
$$
   \{N_t\}_{t\in[0,T]}\quad\text{and}\quad
   \{N_t^2-t\}_{t\in[0,T]}
$$
are martingales for $\{\mathcal{A}_t\}_{t\in[0,T]}$. It follows from
the properties of martingales that
\begin{equation*}
   N_t=\mathbb{E}[N_T|\mathcal{A}_t],\quad t\in[0,T],
\end{equation*}
where $\mathbb{E}[\,\cdot\,|\mathcal{A}_t]$ is a conditional
expectation with respect to the $\sigma$-algebra $\mathcal{A}_t$. It
is well known that $\mathbb{E}[\,\cdot\,|\mathcal{A}_t]$ is the
orthogonal projector in the space $L^2(\Omega,\mathcal{A},P)$ onto
its subspace $L^2(\Omega,\mathcal{A}_t,P)$ and, moreover, the
corresponding projector-valued function $
   \mathbb{R}_+\ni t\mapsto E_t:=\mathbb{E}[\,\cdot\,|\mathcal{A}_t]
$ is a resolution of identity in $L^2(\Omega,\mathcal{A},P)$, see
e.g. \cite{S, BZU87, BZU89, M, B98a}. In this way the normal
martingale $\{N_t\}_{t\in[0,T]}$ can be interpreted as an abstract
martingale, i.e.,
$$
   [0,T]\ni t\mapsto N_t=\mathbb{E}[N_T|\mathcal{A}_t]=E_tN_T\in\mathcal{H}.
$$
Hence, in the space $L^2(\Omega,\mathcal{A},P)$ we can construct the
$H$-stochastic integral with respect to  the normal martingale
$N_t$. Let $F\in L^2([0,T]\times\Omega,dt\times P)$ be a square
integrable stochastic process adapted to the filtration
$\{\mathcal{A}_t\}_{t\in[0,T]}$. We consider the operator-valued
function $[0,T]\ni t\mapsto A_F(t)$ whose values are operators
$A_F(t)$ of multiplication by the function $F(t)=F(t,\cdot)\in
L^2(\Omega,\mathcal{A},P)$ in the space $L^2(\Omega,\mathcal{A},P)$,
$$
   L^2(\Omega,\mathcal{A},P)\supset\mathop{\rm Dom}(A_F(t))\ni
   G\mapsto A_F(t)G:=F(t)G\in L^2(\Omega,\mathcal{A},P).
$$
In this paper we prove that the $H$-stochastic integral of $[0,T]\ni
t\mapsto A_F(t)$ coincides with the classical It\^{o} stochastic
integral $\int_{[0,T]}F(t)\,dN_t$ of $F$. That is,
$$
   \int_{[0,T]}A_F(t)\,dN_t=\int_{[0,T]}F(t)\,dN_t.
$$

In the last part of this note we show that the It\^{o} integral in a
Fock space is the $H$-stochastic integral and establish a connection
of such an integral with the classical It\^{o} stochastic integral.
The corresponding results are given without proofs (the proofs
will be given in a forthcoming publication). Note that the It\^{o}
integral in a Fock space is a useful tool in the quantum
stochastic calculus, see e.g. \cite{Attal} for more details.

We remark that in \cite{BZU87, BZU89} the authors gave a
definition  of the operator-valued stochastic integral
\begin{equation*}
      B:=\int_{[0,T]}A(t)\,dE_t
\end{equation*}
for a family $\{A(t)\}_{t\in[0,T]}$ of commuting normal operators in
$\mathcal{H}$. Such an integral was defined using a spectral theory of
commuting normal operators. It is clear that for a fixed vector
$M\in \mathop{\rm Dom}(B)\subset \mathcal{H}$ the formula
\begin{equation*}
      \int_{[0,T]}A(t)\,dM_t:=\Big(\int_{[0,T]}A(t)\,dE_t\Big)M
\end{equation*}
can be regarded as a definition of integral (\ref{eq.1.1}). In this
way we obtain another definition of integral (\ref{eq.1.1})
different from the one we have proposed in this paper.



\section{The construction of the $H$-stochastic integral}
\enlargethispage{1cm}

Let $\mathcal{H}$ be a complex Hilbert space,
$\mathcal{L}(\mathcal{H})$ be a space of all bounded linear
operators in $\mathcal{H}$, $M\neq 0$ be a fixed vector from
$\mathcal{H}$ and
$$
  [0,T]\ni t\mapsto E_t\in\mathcal{L}(\mathcal{H})
$$
be a resolution of identity in $\mathcal{H}$, that is a
right-continuous increasing family of orthogonal projections in
$\mathcal{H}$ such that $E_T=1$.
Note that the resolution of identity $E$ can be regarded as a
projector-valued measure $\mathcal{B}([0,T])\ni \alpha\mapsto
E(\alpha)\in\mathcal{L}(\mathcal{H})$ on the Borel $\sigma$-algebra
$\mathcal{B}([0,T])$. Namely, for any interval $(s,t]\subset [0,T]$
we set
$$
   E((s,t]):=E_t-E_s,\quad E(\{0\})
   :=E_0,\quad E(\varnothing):=0,
$$
and extend this definition to all Borel subsets of $[0,T]$, see e.g.
\cite{BSU90}  for more details.

By definition, the $\mathcal{H}$-valued function
\begin{equation*}
      [0,T]\ni t\mapsto M_t:=E_tM\in \mathcal{H}
\end{equation*}
is an {\it abstract martingale} in the Hilbert space $\mathcal{H}$.

In this section we give a definition of integral (\ref{eq.1.1}) for
a certain class of operator-valued functions with respect to the
abstract martingale $M_t$. A construction of such an integral is
given step-by-step, beginning with the simplest class of
operator-valued functions. Let us introduce the required class of
simple functions.

For each point $t\in[0,T]$, we denote by
$$
   \mathcal{H}_{M}(t):=\mathop{\rm span}\{M_{s_2}-M_{s_1}
   \,|\,(s_1,s_2]\subset(t,T]\}\subset
   \mathcal{H}
$$
the linear span of the set $\{M_{s_2}-M_{s_1}
   \,|\,(s_1,s_2]\subset(t,T]\}$ in $\mathcal{H}$ and
by
$$
   \mathcal{L}_{M}(t)=\mathcal{L}(\mathcal{H}_{M}(t)\to\mathcal{H})
$$
the set of all linear operators in $\mathcal{H}$  that
continuously act from $\mathcal{H}_{M}(t)$ to $\mathcal{H}$. The
increasing family
$\mathcal{L}_{M}=\{\mathcal{L}_{M}(t)\}_{t\in[0,T]}$ will play here
a role of the filtration $\{\mathcal{A}_t\}_{t \in [0,T]}$ in the
classical martingale theory.

For a fixed $t\in[0,T)$, a linear operator $A$ in  $ \mathcal{H}$ 
will be called {\it $\mathcal{L}_{M}(t)$-measurable} if
\begin{itemize}
  \item[(i)] $A\in\mathcal{L}_{M}(t)$  and, for all $s\in[t,T)$,
       \begin{equation*}
          \|A\|_{\mathcal{L}_{M}(t)}=\|A\|_{\mathcal{L}_{M}(s)}
          :=\sup
          \Big\{\frac{\|Ag\|_{\mathcal{H}}}{\|g\|_{\mathcal{H}}}\,\Big|\,
          g\in\mathcal{H}_{M}(s),\,g\neq 0\Big\}.
       \end{equation*}
  \item[(ii)] $A$ is partially commuting with the resolution of identity $E$. More precisely,
      \begin{equation*}
         AE_sg=E_sAg,\quad g\in\mathcal{H}_{M}(t),\quad s\in[t,T].
      \end{equation*}
\end{itemize}

Such a definition of $\mathcal{L}_{M}(t)$-measurability is motivated
by a number of reasons:
\begin{itemize}
  \item $\mathcal{L}_{M}(t)$-measurability is a natural generalization of the usual
        $\mathcal{A}_t$-measurability in classical stochastic calculus,
        see Lemma \ref{l.1} (Section \ref{s.3}) for more details;
      \item in some sense, $\mathcal{L}_{M}(t)$-measurability (for
        each $t$) is the minimal restriction on the behavior of a
        simple operator-valued function $[0,T]\ni t\mapsto A(t)$
        that will allow us to obtain an analogue of the It\^{o}
        isometry property (see inequality (\ref{eq.3.3}) below) and
        to extend the $H$-stochastic integral from a simple class of
        functions to a wider one.
\end{itemize}

In what follows, it is convenient for us to call
$\mathcal{L}_{M}(T)$-measurable all linear operators in
$\mathcal{H}$. Evidently, if a linear operator $A$ in $\mathcal{H}$
is $\mathcal{L}_{M}(t)$-measurable for some $t\in[0,T]$ then $A$ is
$\mathcal{L}_{M}(s)$-measurable for all $s\in [t,T]$.

\enlargethispage{1cm}
A family $\{A(t)\}_{t\in[0,T]}$ of linear operators  in
$\mathcal{H}$  will be called {\it a simple $\mathcal{L}_M$-adapted
operator-valued function on} $[0,T]$ if, for each $t\in[0,T]$, the
operator $A(t)$ is $\mathcal{L}_{M}(t)$-measurable and there exists
a partition $0=t_0<t_1<\cdots<t_n=T$ of $[0,T]$ such that
\begin{equation}\label{eq.3.1}
      A(t)=\sum_{k=0}^{n-1}A_k\varkappa_{(t_k,t_{k+1}]}(t),\quad t\in[0,T],
\end{equation}
where $\varkappa_{\alpha}(\cdot)$ is the characteristic  function of
the Borel set $\alpha\in\mathcal{B}([0,T])$.

Let $S=S(M)$ denote the space of all simple $\mathcal{L}_M$-adapted
operator-valued functions on $[0,T]$. For a function $A\in S$ with
representation (\ref{eq.3.1}) we define an {\it $H$-stochastic
integral} of $A$ with respect to
the abstract martingale $M_t$ 
through the formula
\begin{equation}\label{eq.3.21}
   \int_{[0,T]}A(t)\,dM_t:=\sum_{k=0}^{n-1}A_k(M_{t_{k+1}}-M_{t_{k}})
   \in\mathcal{H}.
\end{equation}
We can show that this definition  does not depend on  the choice of
representation of the simple function $A$ in the space $S$.

In the space $S$ we introduce a quasinorm by setting
\begin{equation*}
      \|A\|_{S_2}:=\Big(\int_{[0,T]}\|A(t)\|_{\mathcal{L}_{M}(t)}^{2}\,d\mu(t)
      \Big)^{\frac{1}{2}}
      :=\Big(\sum_{k=0}^{n-1}\|A_k\|_{\mathcal{L}_{M}(t_k)}^{2}\mu((t_k,t_{k+1}])
      \Big)^{\frac{1}{2}}
\end{equation*}
for each $A\in S$ with representation (\ref{eq.3.1}). Here the
measure $\mu$ is defined by the formula
$$
   \mathcal{B}([0,T])\ni\alpha\mapsto
   \mu(\alpha):=\|M(\alpha)\|_{\mathcal{H}}^{2}=(E(\alpha)M,M)_{\mathcal{H}}\in\mathbb{R}_+,
$$
where $M(\alpha):=E(\alpha)M$ for all $\alpha\in\mathcal{B}([0,T])$,
in particular,
$$
   M((t_k,t_{k+1}]):=E((t_k,t_{k+1}])M=M_{t_{k+1}}-M_{t_{k}},
   \quad (t_k,t_{k+1}]\subset[0,T].
$$

The following statement is fundamental.

\begin{thm}\label{t.1}
       Let $A,B\in S$ and $a,b\in\mathbb{C}$. Then
       \begin{equation*}
          \int_{[0,T]}\Big(aA(t)+bB(t)\Big)\,dM_t
          =a\int_{[0,T]}A(t)\,dM_t+b\int_{[0,T]}B(t)\,dM_t
       \end{equation*}
       and
       \begin{equation}\label{eq.3.3}
             \Big\|\int_{[0,T]}A(t)\,dM_t\Big\|_{\mathcal{H}}^{2}
             \leq \int_{[0,T]}\|A(t)\|_{\mathcal{L}_{M}(t)}^{2}\,d\mu(t).
       \end{equation}
\end{thm}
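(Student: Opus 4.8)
The linearity assertion is immediate: given $A,B\in S$, refine their partitions to a common partition $0=t_0<t_1<\cdots<t_n=T$, write $A(t)=\sum_k A_k\varkappa_{(t_k,t_{k+1}]}(t)$ and $B(t)=\sum_k B_k\varkappa_{(t_k,t_{k+1}]}(t)$, observe that $aA+bB$ is again in $S$ with coefficients $aA_k+bB_k$, and apply definition \eqref{eq.3.21} together with the independence of \eqref{eq.3.21} on the representation. So the real content is the isometric inequality \eqref{eq.3.3}.

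For \eqref{eq.3.3} the plan is to compute $\bigl\|\sum_{k=0}^{n-1}A_k(M_{t_{k+1}}-M_{t_k})\bigr\|_{\mathcal{H}}^2$ by expanding the inner product into diagonal and off-diagonal terms. Write $\Delta_k M:=M_{t_{k+1}}-M_{t_k}=E((t_k,t_{k+1}])M$. The key structural fact is that the increments $\Delta_k M$ are \emph{orthogonal} in $\mathcal{H}$, since the projections $E((t_k,t_{k+1}])$ have mutually disjoint (orthogonal) ranges; moreover $\|\Delta_k M\|_{\mathcal{H}}^2=\mu((t_k,t_{k+1}])$ by definition of $\mu$. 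The difficulty is that after applying the operators $A_k$ the vectors $A_k\Delta_k M$ need no longer be orthogonal. The mechanism that rescues the estimate is condition (ii) in the definition of $\mathcal{L}_M(t)$-measurability: for $j<k$ one has $\Delta_j M\in\mathcal{H}_M(t_{j+1})\subset\mathcal{H}_M(t_k)$, hence $A_k$ acts on $\Delta_j M$ and commutes with $E_{t_{k}}$ on $\mathcal{H}_M(t_k)$; combined with $E((t_k,t_{k+1}])E_{t_k}=0$ this should force the cross terms $(A_k\Delta_k M, A_j\Delta_j M)_{\mathcal{H}}$ (for $j<k$) to vanish — one pushes the projection $E((t_k,t_{k+1}])$ off the $\Delta_k M$ factor, commutes it through $A_j$ onto $\Delta_j M$, and kills it since $\Delta_j M$ lies in the range of $E_{t_{j+1}}\le E_{t_k}$. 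Thus the expansion collapses to the diagonal:
\begin{equation*}
   \Big\|\int_{[0,T]}A(t)\,dM_t\Big\|_{\mathcal{H}}^2
   =\sum_{k=0}^{n-1}\|A_k\Delta_k M\|_{\mathcal{H}}^2 .
\end{equation*}

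Finally, for each $k$ we estimate $\|A_k\Delta_k M\|_{\mathcal{H}}\le\|A_k\|_{\mathcal{L}_M(t_k)}\,\|\Delta_k M\|_{\mathcal{H}}$, which is legitimate because $\Delta_k M\in\mathcal{H}_M(t_{k+1})\subset\mathcal{H}_M(t_k)$ and, by condition (i), the operator norm of $A_k$ over $\mathcal{H}_M(t_k)$ equals that over $\mathcal{H}_M(s)$ for every $s\in[t_k,T)$, so the bound is uniform and consistent. Using $\|\Delta_k M\|_{\mathcal{H}}^2=\mu((t_k,t_{k+1}])$ and summing gives $\sum_k\|A_k\|_{\mathcal{L}_M(t_k)}^2\,\mu((t_k,t_{k+1}])=\|A\|_{S_2}^2=\int_{[0,T]}\|A(t)\|_{\mathcal{L}_M(t)}^2\,d\mu(t)$, which is exactly the right-hand side of \eqref{eq.3.3}.

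I expect the main obstacle to be the careful verification that the off-diagonal inner products vanish: this is where the partial-commutativity hypothesis (ii) must be used exactly, keeping track of which vectors lie in which subspace $\mathcal{H}_M(t)$ so that each application of $A_j$ or $A_k$ and each commutation with a spectral projection is justified. The inequality (rather than equality) in \eqref{eq.3.3} then comes solely from the single operator-norm estimate $\|A_k g\|\le\|A_k\|_{\mathcal{L}_M(t_k)}\|g\|$ applied to $g=\Delta_k M$; everything else is an identity.
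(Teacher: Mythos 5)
Your proposal is correct and follows essentially the same route as the paper: expand the square of the sum, use the partial commutativity (ii) to write $A_kM(\Delta_k)=E(\Delta_k)A_kM(\Delta_k)$, let the mutual orthogonality of the projections $E(\Delta_k)$ annihilate the cross terms, and bound each diagonal term by $\|A_k\|_{\mathcal{L}_M(t_k)}^2\,\mu(\Delta_k)$. One bookkeeping slip is worth correcting: the family $\mathcal{H}_M(t)$ is \emph{decreasing} in $t$, and $\Delta_jM=M_{t_{j+1}}-M_{t_j}$ lies in $\mathcal{H}_M(t_j)$, not in $\mathcal{H}_M(t_{j+1})$ (nor in $\mathcal{H}_M(t_k)$ for $k>j$); the commutation of $E(\Delta_k)$ through $A_j$ is justified because $\Delta_jM\in\mathcal{H}_M(t_j)$ and $t_k,t_{k+1}\in[t_j,T]$, which is exactly what condition (ii) for the $\mathcal{L}_M(t_j)$-measurable operator $A_j$ supplies, so the argument goes through as you describe once these memberships are stated correctly.
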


\begin{proof}
      The first assertion is trivial.

      Let us check inequality  (\ref{eq.3.3}). Using (i), (ii)
      and properties of the resolution of identity $E$,
      for $A\in S$ with representation (\ref{eq.3.1}),
      we obtain
      \begin{equation*}
            \begin{split}
                   \Big\|\int_{[0,T]}A(t)\,dM_t\Big\|_{\mathcal{H}}^{2}
                 &
                   =\Big(\int_{[0,T]}A(t)\,dM_t,
                   \int_{[0,T]}A(t)\,dM_t\Big)_{\mathcal{H}}\\
                 &
                   =\sum_{k,m=0}^{n-1}\big(A_kM(\Delta_k),A_mM(\Delta_m)\big)_{\mathcal{H}}\\
                 &
                   =\sum_{k,m=0}^{n-1}\big(A_kE(\Delta_k)M,A_mE(\Delta_m)M\big)_{\mathcal{H}}\\
                 &
                   =\sum_{k,m=0}^{n-1}\big(E(\Delta_k)A_kE(\Delta_k)M,
                   E(\Delta_m)A_mE(\Delta_m)M\big)_{\mathcal{H}}\\
                 &
                   =\sum_{k=0}^{n-1}\big(A_kE(\Delta_k)M,A_kE(\Delta_k)M\big)_{\mathcal{H}}
                   =\sum_{k=0}^{n-1}\|A_kM(\Delta_k)\|_{\mathcal{H}}^{2}\\
                 &
                   \quad\leq\sum_{k=0}^{n-1}\|A_k\|_{\mathcal{L}_M(t_k)}^{2}\|
                   M(\Delta_k)\|_{\mathcal{H}}^{2}
                   =\sum_{k=0}^{n-1}\|A_k\|_{\mathcal{L}_{M}(t_k)}^{2}\mu(\Delta_k)\\
                 &
                   =\int_{[0,T]}\|A(t)\|_{\mathcal{L}_{M}(t)}^{2}\,d\mu(t),
            \end{split}
      \end{equation*}
      where $\Delta_k:=(t_k,t_{k+1}]$ for all
      $k\in\{0,\ldots,n-1\}$.
\end{proof}

\enlargethispage{1cm}
Inequality (\ref{eq.3.3}) enables us to extend the $H$-stochastic
integral to operator-valued functions $[0,T]\ni t\mapsto A(t)$
which are not necessarily simple. Namely, denote by $S_2=S_2(M)$ a
Banach space associated with the quasinorm $\|\cdot\|_{S_2}$. For
its construction, it is first necessary to pass from $S$ to the
factor space $$
   \dot{S}:=S/\{A\in S\,|\,\|A\|_{S_2}=0\}
$$
and then to take
the completion of $\dot{S}$. 
It is not difficult to see that elements  of the space $S_2$
are equivalence classes of operator-valued functions on $[0,T]$
whose values are linear operators in the space $\mathcal{H}$.

An operator-valued function $[0,T]\ni t\mapsto A(t)$ will be called
{\it $H$-stochastic integrable} with respect to $M_t$ if $A$ belongs
to the space $S_2$. It follows from the definition of the space
$S_2$ that for each $A\in S_2$ there exists a sequence
$(A_n)_{n=0}^{\infty}$ of simple operator-valued functions $A_n\in
S$ such that
\begin{equation}\label{eq.3.4}
              \int_{[0,T]}\|A(t)-A_n(t)\|_{\mathcal{L}_M(t)}^{2}\,d\mu(t)\rightarrow 0
              \quad {\text{\rm as}}\quad n\rightarrow\infty.
\end{equation}
Due to (\ref{eq.3.3}), for such a sequence $(A_n)_{n=0}^{\infty}$,
the limit
$$
   \lim_{n\to\infty}\int_{[0,T]}A_n(t)\,dM_t
$$
exists in $\mathcal{H}$ and does not dependent on the  choice of the
sequence $(A_n)_{n=0}^{\infty}\subset S$ satisfying (\ref{eq.3.4}).
We denote this limit by
$$
   \int_{[0,T]}A(t)\,dM_t:=\lim_{n\to\infty}\int_{[0,T]}A_n(t)\,dM_t
$$
and call it the {\it $H$-stochastic integral of} $A\in S_2$ with
respect to the abstract martingale $M_t$. It is clear that for all
$A\in S_2$ the assertions of Theorem~1 still hold.

Note one simple property of the integral introduced above. Let $U$
be some unitary operator acting from $\mathcal{H}$ onto another
complex Hilbert space $\mathcal{K}$. Then
\begin{equation*}
      [0,T]\ni t\mapsto G_t:=UM_t\in \mathcal{K}
\end{equation*}
is an abstract martingale in the space $\mathcal{K}$ because, for
any $t\in[0,T]$,
\begin{equation*}
      G_t=UM_t=X_tG,\quad X_t:=UE_tU^{-1},\quad
      G:=UM\in\mathcal{K},
\end{equation*}
and $X_t$ is a resolution of identity in the space $\mathcal{K}$.

Let an operator-valued function $[0,T]\ni t\mapsto A(t)$  be
$H$-stochastic integrable with respect to $M_t$. One can show that
the operator-valued function $[0,T]\ni t\mapsto UA(t)U^{-1}$ is
$H$-stochastic integrable with respect to $G_t$ and
\begin{equation*}
      U\Big(\int_{[0,T]}A(t)\,dM_t\Big)
      =\int_{[0,T]}UA(t)U^{-1}\,dG_t\in\mathcal{K}.
\end{equation*}



\section{The It\^{o} stochastic integral as
        an $H$-stochastic integral}\label{s.3}

Let
$
   (\Omega, \mathcal{A}, P)
$ be a complete probability space and $\{\mathcal{A}_t\}_{t \in
[0,T]}$  be a right continuous filtration.   Suppose that the
$\sigma$-algebra $\mathcal{A}_0$ contains all  $P$-null sets of
$\mathcal{A}$ and $\mathcal{A}=\mathcal{A}_T$. Moreover, we assume
that $\mathcal{A}_0$ is trivial, i.e., every set
$\alpha\in\mathcal{A}_0$ has probability $0$ or $1$.

Let $N=\{N_t\}_{t\in[0,T]}$ be a {\it normal martingale}  on
$(\Omega, \mathcal{A}, P)$ with respect to  $\{\mathcal{A}_t\}_{t
\in [0,T]}$. That is,  $N_t\in L^2(\Omega, \mathcal{A}_t, P)$ for
all $t\in[0,T]$ and
\begin{equation*}
   \mathbb{E}[N_t-N_s|\mathcal{A}_s]=0,\quad
   \mathbb{E}[(N_t-N_s)^2|\mathcal{A}_s]=t-s
\end{equation*}
for all $s,t\in[0,T]$ such that $s<t$. Without  loss of generality
one can assume that $N_0=0$. Note that there are many examples of
normal martingales, --- the Brownian motion, the compensated Poisson
process, the Az${\rm \acute{e}}$ma martingales and others, see for
instance  \cite{Emery89, DMM92, M}.

We will denote by $L_a^2([0,T]\times\Omega)$ the set of all
functions (equivalence classes), adapted to the filtration
$\{\mathcal{A}_t\}_{t \in [0,T]}$, from the space
$$
   L^2([0,T]\times\Omega)
   :=L^2([0,T]\times\Omega,\mathcal{B}([0,T])\times\mathcal{A},dt\times P)
$$
where $dt$ is the Lebesgue measure on $\mathcal{B}([0,T])$.

Let us show that the {\it It\^{o} stochastic integral}
$\int_{[0,T]} F(t)\,dN_t$ of $F\in L_a^2([0,T]\times\Omega)$ with
respect to the normal martingale $N$ can be considered as an
$H$-stochastic integral (see e.g. \cite{GSk75, P90} for the
definition and properties of the classical It\^{o}  integral). To
this end, we set
$
   \mathcal{H}:=L^2(\Omega,\mathcal{A},P)
$ and consider, in this space, the resolution of identity
$$
   [0,T]\ni t\mapsto
   E_t:=\mathbb{E}[\,\cdot\,|\mathcal{A}_t]\in\mathcal{L}(\mathcal{H})
$$ generated by the filtration $\{\mathcal{A}_t\}_{t \in [0,T]}$.
Let $M:=N_T\in L^2(\Omega,\mathcal{A},P)$, then the
corres\-ponding abstract martingale
\begin{equation*}
      [0,T]\ni t\mapsto N_t:=E_tN_T=
      \mathbb{E}[N_T|\mathcal{A}_t]\in \mathcal{H}
\end{equation*}
is our normal martingale.  Note also that
$$
   \mu([0,t])=\|N([0,t])\|_{L^2(\Omega,\mathcal{A},P)}^{2}=
   \|N_t\|_{L^2(\Omega,\mathcal{A},P)}^{2}=\mathbb{E}[N_t^2]
   =\mathbb{E}[N_t^2\,|\,\mathcal{A}_0]=t,
$$
i.e., $\mu$ is the Lebesgue measure on $\mathcal{B}([0,T])$.

In the context of this section,
$\mathcal{L}_{M}(t)$-measurabi\-lity is equivalent to the usual
$\mathcal{A}_t$-measurabi\-lity. More precisely, the following
result holds.

\begin{lem}\label{l.1}
       Let $t\in[0,T)$. For given $F\in L^2(\Omega,\mathcal{A},P)$
       the operator $A_F$ of  multiplication by the function $F$ in
       the space $L^2(\Omega,\mathcal{A},P)$
       is $\mathcal{L}_{N}(t)$-measurable 
       if and only if the function $F$ is
       $\mathcal{A}_t$-measurable, i.e., $F=\mathbb{E}[F|\mathcal{A}_t]$.
       Moreover, if $F\in L^2(\Omega,\mathcal{A},P)$ is an $\mathcal{A}_t$-measurable function then
       \begin{equation}\label{eq.3.0.101}
         \|A_F\|_{\mathcal{L}_{N}(t)}=\|A_F\|_{\mathcal{L}_{N}(s)}
         =\|F\|_{L^2(\Omega,\mathcal{A},P)},
         \quad s\in[t,T).
       \end{equation}
\end{lem}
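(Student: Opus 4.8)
The plan is to prove the two implications separately, reading off the equalities~(\ref{eq.3.0.101}) from the first one.

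\emph{From $\mathcal A_t$-measurability of $F$ to $\mathcal L_N(t)$-measurability of $A_F$.} I would fix $s\in[t,T]$, take $g\in\mathcal H_N(s)$ and, passing to a common refinement of the defining intervals, write $g=\sum_j c_j(N_{s_{j+1}}-N_{s_j})$ for a partition $s=s_0<\dots<s_m\le T$ (some $c_j$ possibly $0$). Because $N$ is a normal martingale and each $s_j\ge t$, for $j\neq k$ conditioning the mixed term on the larger of the two algebras $\mathcal A_{s_j},\mathcal A_{s_k}$ and using $\mathbb E[N_{s_{k+1}}-N_{s_k}\mid\mathcal A_{s_k}]=0$ gives $\|g\|_{\mathcal H}^2=\sum_j|c_j|^2(s_{j+1}-s_j)$. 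The same scheme applies to $A_Fg=Fg$: since $F$ is $\mathcal A_t$-measurable it is $\mathcal A_{s_j}$-measurable for all $j$, so the factors $F^2(N_{s_{j+1}}-N_{s_j})$ come out of the relevant conditional expectations, the mixed terms $\mathbb E[F^2(N_{s_{j+1}}-N_{s_j})(N_{s_{k+1}}-N_{s_k})]$ vanish as before, and the diagonal terms equal $\mathbb E[F^2\,\mathbb E[(N_{s_{j+1}}-N_{s_j})^2\mid\mathcal A_{s_j}]]=\|F\|_{\mathcal H}^2(s_{j+1}-s_j)$. Hence $\|A_Fg\|_{\mathcal H}=\|F\|_{\mathcal H}\,\|g\|_{\mathcal H}$ for every $g\in\mathcal H_N(s)$; in particular $\mathcal H_N(s)\subset\mathrm{Dom}(A_F)$, $A_F\in\mathcal L_N(s)$, and $\|A_F\|_{\mathcal L_N(s)}=\|F\|_{\mathcal H}$ for all $s\in[t,T)$, which is simultaneously condition (i) and the equalities~(\ref{eq.3.0.101}). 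Condition (ii) is then immediate: for $g\in\mathcal H_N(t)$ and $s\in[t,T]$ we have $A_FE_sg=F\,\mathbb E[g\mid\mathcal A_s]=\mathbb E[Fg\mid\mathcal A_s]=E_sA_Fg$, the middle step being ``taking out what is known'' ($F$ is $\mathcal A_s$-measurable and $Fg\in L^2$ by the isometry just established).

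\emph{From $\mathcal L_N(t)$-measurability of $A_F$ to $\mathcal A_t$-measurability of $F$.} The idea is to upgrade (ii) to a martingale statement. Inserting $g=N_w-N_v$ with $t\le v<w\le T$ into (ii) and letting $s$ run over $[t,T]$ yields $\mathbb E[F(N_w-N_v)\mid\mathcal A_s]=0$ for $s\le v$ and $\mathbb E[F(N_w-N_v)\mid\mathcal A_s]=F(N_s-N_v)$ for $v\le s\le w$; since $F(N_w-N_v)=A_F(N_w-N_v)\in L^2$, this says that for each $v\in[t,T]$ the process $[v,T]\ni w\mapsto F(N_w-N_v)$ is an \emph{adapted} $L^2$-martingale vanishing at $w=v$. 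Write $Z_w:=F(N_w-N_t)$.

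Now I would invoke the stochastic structure of $N$. The increments of $Z$ are those of $N$ multiplied by the fixed random variable $F$, so $[Z,N]_w-[Z,N]_t=F\,([N]_w-[N]_t)$, while by \'{E}mery's structure equation $d[N]_r=dr+\phi_r\,dN_r$ (valid for every normal martingale, with $\phi$ predictable) one has $F\,([N]_w-[N]_t)=F(w-t)+F\int_t^w\phi_r\,dN_r$. The last term is again adapted: approximating $\phi$ by simple predictable processes $\sum_k\xi_k\varkappa_{(a_k,b_k]}$ with $\xi_k$ bounded $\mathcal A_{a_k}$-measurable and $a_k\ge t$, the product $F\int\phi\,dN$ becomes $\int\phi\,dZ$, a stochastic integral against the $L^2$-martingale $Z$. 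Consequently
\begin{equation*}
   F\,(w-t)=\big([Z,N]_w-[Z,N]_t\big)-\int_t^w\phi_r\,dZ_r
\end{equation*}
is $\mathcal A_w$-measurable for every $w\in(t,T]$, and the right-continuity of $\{\mathcal A_r\}$ then forces $F$ to be $\mathcal A_t$-measurable, i.e.\ $F=\mathbb E[F\mid\mathcal A_t]$.

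\emph{Where the difficulty lies.} The forward direction and~(\ref{eq.3.0.101}) are the isometry computation above and are routine; the converse is the point. Condition (i) alone is too weak --- it only forces an essential supremum of the density $r\mapsto\frac{d}{dr}\mathbb E[F^2(N_r-N_t)^2]$ to be constant on $[t,T)$, which does not determine that density --- so one must genuinely use the \emph{partial} commutation (ii) together with the path structure of $N$. I expect the step identifying $w\mapsto F(N_w-N_t)$ with an adapted martingale and extracting measurability from it via the structure equation to require the most care; the recurring technical nuisance is checking that the increments in play lie in $\mathrm{Dom}(A_F)$, which is exactly what an estimate of type~(\ref{eq.3.3}) guarantees.
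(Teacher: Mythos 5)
Your forward implication and the identity (\ref{eq.3.0.101}) are correct and are essentially the paper's own computation: you carry out explicitly, for a general linear combination $\sum_j c_j(N_{s_{j+1}}-N_{s_j})$, the orthogonality-of-increments argument that the paper compresses into ``we can similarly show'', and you verify condition (ii) by the same ``taking out what is known'' identity $FE_sG=\mathbb{E}[FG|\mathcal{A}_s]$. That half is fine.

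The converse is where the proposal breaks down. Your argument hinges on the assertion that \'{E}mery's structure equation $d[N]_r=dr+\phi_r\,dN_r$ is ``valid for every normal martingale''. It is not: for a normal martingale one only knows that $[N]_t-t$ is a martingale (because $\langle N\rangle_t=t$), and writing this martingale as a predictable stochastic integral $\int\phi\,dN$ requires the predictable representation property, which is an additional hypothesis nowhere assumed in the lemma --- the reference \cite{Attal} introduces structure equations precisely as an extra assumption on a normal martingale, satisfied by Brownian motion, the compensated Poisson process and the Az\'{e}ma martingales, but not guaranteed in general. So your derivation of the $\mathcal{A}_w$-measurability of $F(w-t)$ is only available for a restricted class of $N$; in addition, the identities $[Z,N]_w-[Z,N]_t=F([N]_w-[N]_t)$ and $F\int\phi\,dN=\int\phi\,dZ$ require integrability of $\phi$ against $Z$ (i.e.\ against $F^2\,d[N]$), which you do not verify. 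None of this machinery is needed: the efficient use of (ii) is to place $s$ \emph{after} the increment rather than before it. For $(s_1,s_2]\subset(t,s]$ put $G=N_{s_2}-N_{s_1}$; then $G$ is $\mathcal{A}_s$-measurable, so $A_FE_sG=FG$ while $E_sA_FG=\mathbb{E}[FG|\mathcal{A}_s]=G\,\mathbb{E}[F|\mathcal{A}_s]$, whence $\big(F-\mathbb{E}[F|\mathcal{A}_s]\big)G=0$ and therefore $F=\mathbb{E}[F|\mathcal{A}_s]$ for every $s\in(t,T]$; right-continuity of $s\mapsto E_s$ then yields the case $s=t$. This is the paper's two-line argument, using nothing beyond the partial commutation relation.
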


\begin{proof}
      Suppose $F\in L^2(\Omega,\mathcal{A},P)$ is an $\mathcal{A}_t$-measurable function.
      Let us show that the operator $A_F$  is $\mathcal{L}_{N}(t)$-measurable.

      First, we prove that $A_F\in\mathcal{L}_N(t)$. Taking into
      account that $F$ is an $\mathcal{A}_t$-measurable function,
      $\{N_t\}_{t\in[0,T]}$ is the normal martingale and the
      $\sigma$-algebra $\mathcal{A}_0$ is trivial, for any interval
      $(s_1,s_2]\subset(t,T]$, we obtain
\begin{equation*}
            \begin{split}
                  \|A_F(N_{s_2}-N_{s_1})\|
                  _{L^2(\Omega,\mathcal{A},P)}^{2}
                &
                  =\|F(N_{s_2}-N_{s_1})\|_{L^2(\Omega,\mathcal{A},P)}^{2}
                  =\mathbb{E}[F^2(N_{s_2}-N_{s_1})^2]\\
                &
                  =\mathbb{E}[F^2(N_{s_2}-N_{s_1})^2|\mathcal{A}_{0}]
                  =\mathbb{E}\big[F^2\mathbb{E}[(N_{s_2}-N_{s_1})^2|
                  \mathcal{A}_{s_1}]\big|\mathcal{A}_{0}\big]\\
                &
                  =\mathbb{E}\big[F^2\mathbb{E}[(N_{s_2}-N_{s_1})^2|\mathcal{A}_{s_1}]\big]
                  =\mathbb{E}[F^2](s_2-s_1)\\
                &
                  =\mathbb{E}[F^2]\mathbb{E}[(N_{s_2}-N_{s_1})^2]\\
&
                  =\|F\|_{L^2(\Omega,\mathcal{A},P)}^{2}
                  \|N_{s_2}-N_{s_1}\|_{L^2(\Omega,\mathcal{A},P)}^{2}.
            \end{split}
\end{equation*}

      We can similarly show that
      \begin{equation*}
         \|A_FG\|_{L^2(\Omega,\mathcal{A},P)}^{2}
         =\|F\|_{L^2(\Omega,\mathcal{A},P)}^{2}
         \|G\|_{L^2(\Omega,\mathcal{A},P)}^{2}
      \end{equation*}
      for all $G\in \mathcal{H}_{N}(t)={\rm span}\{N_{s_2}-N_{s_1}\,|(s_1,s_2]\subset(t,T]\}$.
      Hence $A_F\in\mathcal{L}_N(t)$ and, moreover, equality (\ref{eq.3.0.101}) takes place.

      Let us check that $A_F$ is partially commuting with $E$, i.e.,
      \begin{equation*}
         A_FE_sG=E_sA_FG,\quad G\in\mathcal{H}_{N}(t),\quad s\in[t,T].
      \end{equation*}
      Since $F\in L^2(\Omega,\mathcal{A},P)$ is an $\mathcal{A}_t$-measurable function and
      $FG\in L^2(\Omega,\mathcal{A},P)$,
      for any  $s\in[t,T]$ and any function
      $G\in\mathcal{H}_N(t)$, we have
      \begin{equation*}
            \begin{split}
                  A_FE_sG
                  =FE_sG
                  =F\mathbb{E}[G|\mathcal{A}_{s}]
                  =\mathbb{E}[FG|\mathcal{A}_{s}]
                  =E_sA_FG.
            \end{split}
      \end{equation*}

\enlargethispage{1cm}
      Thus, the first part of the lemma is proved.
      
      Let us prove the converse statement of the lemma: if for a
      given $F\in L^2(\Omega,\mathcal{A},P)$ the operator $A_F$ is
      $\mathcal{L}_{N}(t)$-measurable then $F$ is an
      $\mathcal{A}_t$-measurable function.

      Since $A_F$ is an $\mathcal{L}_{N}(t)$-measurable operator,
      we see that for any  $s\in[t,T]$
      \begin{equation*}
            A_FE_sG=E_sA_FG,\quad G\in\mathcal{H}_{N}(t),
      \end{equation*}
      or, equivalently,
      \begin{equation}\label{4.4}
            A_F\mathbb{E}[G|\mathcal{A}_{s}]=\mathbb{E}[A_FG|\mathcal{A}_{s}],
            \quad G\in\mathcal{H}_{N}(t).
      \end{equation}

      Let $s\in(t,T)$ and $(s_1,s_2]\subset(t,s]$. We take
      $$
         G:=N_{s_2}-N_{s_1} \in\mathcal{H}_{N}(t).
      $$
      Evidently, $G$ is an $\mathcal{A}_{s}$-measurable function and
      $$
         A_F\mathbb{E}[G|\mathcal{A}_{s}]=A_FG=FG,\quad
         \mathbb{E}[A_FG|\mathcal{A}_{s}]
                   =\mathbb{E}[FG|\mathcal{A}_s]=G\mathbb{E}[F|\mathcal{A}_s].
      $$
      Hence, using (\ref{4.4}), we obtain
      $$
         FG=G\mathbb{E}[F|\mathcal{A}_s].
      $$
      As a result,
      $$
         F=\mathbb{E}[F|\mathcal{A}_s],\quad s\in(t,T].
      $$
      Since the resolution of identity
      $[0,T]\ni s\mapsto E_s=\mathbb{E}[\,\cdot\,|\mathcal{A}_s]\in\mathcal{L}(\mathcal{H})$
      is a right-continuous function, the latter equality still holds
      for  $s=t$, and therefore $F$ is an
      $\mathcal{A}_t$-measurable function.
\end{proof}

As a simple consequence of Lemma \ref{l.1} we obtain the following
result.


\begin{thm}\label{t.2}
       Let $F$ belong to $L^2([0,T]\times\Omega)$.
       The family $\{A_{F}(t)\}_{t\in[0,T]}$ of the operators $A_{F}(t)$ of  multiplication by
       $F(t)=F(t,\cdot)\in L^2(\Omega,\mathcal{A},P)$ in the space $L^2(\Omega,\mathcal{A},P)$,
       $$
          L^2(\Omega,\mathcal{A},P)\supset\mathop{\rm Dom}(A_{F}(t))\ni
          G\mapsto A_{F}(t)G:=F(t)G\in L^2(\Omega,\mathcal{A},P),
       $$
       is $H$-stochastic integrable with respect to
       the normal martingale
       $N$ (i.e. belongs to $S_2$) if and only if $F$ belongs
       to the space $L_a^2([0,T]\times\Omega)$.
\end{thm}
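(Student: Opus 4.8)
The plan is to prove the equivalence by separately characterising membership in $S_2$ and membership in $L_a^2([0,T]\times\Omega)$, and showing these two conditions coincide through the isometric correspondence $F(t)\mapsto A_F(t)$ established fibrewise by Lemma~\ref{l.1}.

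First I would treat the easy direction: suppose $F\in L_a^2([0,T]\times\Omega)$. Then for almost every $t$ the section $F(t)$ is $\mathcal{A}_t$-measurable, so by Lemma~\ref{l.1} each $A_F(t)$ is $\mathcal{L}_N(t)$-measurable with $\|A_F(t)\|_{\mathcal{L}_N(t)}=\|F(t)\|_{L^2(\Omega,\mathcal{A},P)}$; integrating in $t$ and using that $\mu$ is Lebesgue measure (as computed in the text), $\int_{[0,T]}\|A_F(t)\|_{\mathcal{L}_N(t)}^2\,d\mu(t)=\|F\|_{L^2([0,T]\times\Omega)}^2<\infty$. It then remains to exhibit a sequence of simple $\mathcal{L}_N$-adapted operator-valued functions approximating $A_F$ in the $\|\cdot\|_{S_2}$ quasinorm. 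I would take the standard approximation of $F$ by simple adapted processes $F_n=\sum_k F_{n,k}\varkappa_{(t_k,t_{k+1}]}$ with each $F_{n,k}$ being $\mathcal{A}_{t_k}$-measurable and bounded, converging to $F$ in $L^2([0,T]\times\Omega)$; by Lemma~\ref{l.1} the associated $A_{F_n}(t)=\sum_k A_{F_{n,k}}\varkappa_{(t_k,t_{k+1}]}(t)$ lies in $S$, and $\|A_F-A_{F_n}\|_{S_2}^2=\|F-F_n\|_{L^2([0,T]\times\Omega)}^2\to 0$, so $A_F\in S_2$.

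For the converse, suppose the family $\{A_F(t)\}$ belongs to $S_2$. Then by definition there is a sequence $A_n\in S$ with $\int_{[0,T]}\|A_F(t)-A_n(t)\|_{\mathcal{L}_N(t)}^2\,d\mu(t)\to 0$. The key point is that for a simple $\mathcal{L}_N$-adapted function $A_n=\sum_k A_{n,k}\varkappa_{(t_k,t_{k+1}]}$, each $A_{n,k}$ is $\mathcal{L}_N(t_k)$-measurable; I would argue (again via the ideas behind Lemma~\ref{l.1}, or by a direct computation of how $A_{n,k}$ acts on increments $N_{s_2}-N_{s_1}$) that $A_{n,k}$ must agree on $\mathcal{H}_N(t_k)$ with multiplication by some $\mathcal{A}_{t_k}$-measurable $L^2$ function $F_{n,k}$; assembling these gives a genuine simple adapted process $F_n\in L_a^2([0,T]\times\Omega)$ with $A_n(t)$ acting as multiplication by $F_n(t)$ on the relevant subspaces, and with $\|F_n-F_m\|_{L^2([0,T]\times\Omega)}=\|A_n-A_m\|_{S_2}\to 0$. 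Hence $(F_n)$ is Cauchy in $L_a^2([0,T]\times\Omega)$, converging to some $\widetilde F$; testing against increments $N_{s_2}-N_{s_1}$ and passing to the limit shows $F(t)G=\widetilde F(t)G$ for $G$ in a spanning set, forcing $F(t)=\widetilde F(t)$ for a.e.\ $t$, so $F\in L_a^2([0,T]\times\Omega)$.

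The main obstacle is the converse direction, specifically extracting from an abstract $S_2$-approximating sequence the fact that $F$ itself (and not merely some representative of its action on increments) is adapted. The delicate issue is that elements of $S_2$ are only equivalence classes of operator-valued functions, and the multiplication operators $A_F(t)$ are in general unbounded with domain issues, whereas $\mathcal{L}_N(t)$-measurability only constrains their action on the increment subspace $\mathcal{H}_N(t)$; I need to be careful that the identification ``$A_{n,k}$ = multiplication by an $\mathcal{A}_{t_k}$-measurable function on $\mathcal{H}_N(t_k)$'' is legitimate and that convergence in $\|\cdot\|_{S_2}$ genuinely transfers, through the fibrewise isometry of Lemma~\ref{l.1}, to $L^2([0,T]\times\Omega)$-convergence of the symbols, and finally that the limiting symbol coincides a.e.\ with the given $F$ rather than with an a priori different adapted modification. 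Once the right-continuity argument at the endpoint $s=t$ (as in the proof of Lemma~\ref{l.1}) is invoked to close the adaptedness at each level, the rest is routine.
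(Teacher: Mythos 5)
Your forward direction is correct and is essentially the argument the paper intends: Lemma \ref{l.1} gives the fibrewise identity $\|A_F(t)\|_{\mathcal{L}_N(t)}=\|F(t)\|_{L^2(\Omega,\mathcal{A},P)}$, and the standard $L^2$-approximation of an adapted process by simple adapted processes transfers, through that identity, to an $S_2$-approximation of $A_F$ by elements of $S$. (The paper offers no written proof at all; it states Theorem \ref{t.2} as ``a simple consequence of Lemma \ref{l.1}''.)

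The converse is where you part ways with the intended argument, and your route has a genuine gap. You take an arbitrary approximating sequence $A_n\in S$ and assert that each constant block $A_{n,k}$, being $\mathcal{L}_N(t_k)$-measurable, ``must agree on $\mathcal{H}_N(t_k)$ with multiplication by some $\mathcal{A}_{t_k}$-measurable $L^2$ function $F_{n,k}$''. Lemma \ref{l.1} does not say this: it characterises when a \emph{multiplication} operator is $\mathcal{L}_N(t)$-measurable, not that every $\mathcal{L}_N(t)$-measurable operator acts as a multiplication operator on the increment subspace. Conditions (i)--(ii) only constrain the norm behaviour of the operator and its commutation with the projections $E_s$ on $\mathcal{H}_N(t_k)$; nothing in them forces a representation by a fixed random variable, and you give no argument for one. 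The reconstruction is also unnecessary: the hypothesis is that the \emph{specific} family $\{A_F(t)\}$ belongs to $S_2$, which on the paper's reading already entails that for almost every $t$ the multiplication operator $A_F(t)$ is $\mathcal{L}_N(t)$-measurable. The converse half of Lemma \ref{l.1} then applies directly to $A_F(t)$ and yields that $F(t)$ is $\mathcal{A}_t$-measurable for a.e.\ $t$, i.e.\ $F\in L_a^2([0,T]\times\Omega)$ --- no approximating sequence, no recovered symbols $F_{n,k}$, no limit $\widetilde F$, and no need for the final identification $F=\widetilde F$. If you insist on your route you must actually prove the multiplication-operator representation of $A_{n,k}$ on $\mathcal{H}_N(t_k)$ (or restrict from the outset to approximants of multiplication type), and you should also be careful with the last step: from $F(t)G=\widetilde F(t)G$ for increments $G=N_{s_2}-N_{s_1}$ one cannot simply cancel $G$, since such an increment may vanish on a set of positive probability (e.g.\ for the compensated Poisson process).
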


The next theorem shows that the It\^{o} stochastic integral with
respect to the normal martingale $N$ can be interpreted as an
$H$-stochastic integral.

\begin{thm}\label{t.3}
       Let $F\in L_a^2([0,T]\times\Omega)$ and $\{A_{F}(t)\}_{t\in[0,T]}$
       be the corresponding family of the operators $A_{F}(t)$ of  multiplication
       by $F(t)$ in the space $L^2(\Omega,\mathcal{A},P)$. Then
       \begin{equation*}
             \int_{[0,T]}A_{F}(t)\,dN_t=\int_{[0,T]}F(t)\,dN_t.
       \end{equation*}
\end{thm}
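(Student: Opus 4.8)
The plan is to reduce everything to the simple case and then pass to the limit, using Theorem~\ref{t.2} to match up the two approximation schemes. First I would take $F \in L_a^2([0,T]\times\Omega)$ and choose a sequence $(F_n)$ of simple adapted processes, $F_n(t) = \sum_{k} f_k^{(n)} \varkappa_{(t_k,t_{k+1}]}(t)$ with $f_k^{(n)}$ an $\mathcal{A}_{t_k}$-measurable function in $L^2(\Omega,\mathcal{A},P)$, such that $F_n \to F$ in $L^2([0,T]\times\Omega)$; this is the standard density statement underlying the classical It\^o integral, so $\int_{[0,T]} F_n(t)\,dN_t \to \int_{[0,T]} F(t)\,dN_t$ in $L^2(\Omega,\mathcal{A},P)$ by the It\^o isometry. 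On the operator side, the corresponding simple operator-valued function is $A_{F_n}(t) = \sum_k A_{f_k^{(n)}} \varkappa_{(t_k,t_{k+1}]}(t)$, which lies in $S$ because each $A_{f_k^{(n)}}$ is $\mathcal{L}_N(t_k)$-measurable by the "if" direction of Lemma~\ref{l.1}.

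The key bookkeeping step is that the two notions of convergence agree. By Lemma~\ref{l.1}, $\|A_{F_n(t)} - A_{F(t)}\|_{\mathcal{L}_N(t)} = \|A_{F_n(t) - F(t)}\|_{\mathcal{L}_N(t)} = \|F_n(t) - F(t)\|_{L^2(\Omega,\mathcal{A},P)}$ for a.e.\ $t$ (here $F_n(t) - F(t)$ is $\mathcal{A}_t$-measurable since both $F_n$ and $F$ are adapted, and the operator of multiplication by a difference is the difference of the multiplication operators), and since $\mu$ is Lebesgue measure on $[0,T]$ in this setting,
\begin{equation*}
   \int_{[0,T]} \|A_{F_n(t)} - A_{F(t)}\|_{\mathcal{L}_N(t)}^2 \, d\mu(t)
   = \int_{[0,T]} \|F_n(t) - F(t)\|_{L^2(\Omega,\mathcal{A},P)}^2 \, dt
   = \|F_n - F\|_{L^2([0,T]\times\Omega)}^2 \to 0.
\end{equation*}
Thus $A_{F_n} \to A_F$ in the $S_2$-quasinorm, so $(A_{F_n})$ is an admissible approximating sequence for $A_F$ in the sense of \eqref{eq.3.4}, and by definition of the $H$-stochastic integral $\int_{[0,T]} A_{F_n}(t)\,dN_t \to \int_{[0,T]} A_F(t)\,dN_t$ in $\mathcal{H}$.

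It then remains to observe that for each simple function the two integrals literally coincide:
\begin{equation*}
   \int_{[0,T]} A_{F_n}(t)\,dN_t = \sum_k A_{f_k^{(n)}}(N_{t_{k+1}} - N_{t_k}) = \sum_k f_k^{(n)}(N_{t_{k+1}} - N_{t_k}) = \int_{[0,T]} F_n(t)\,dN_t,
\end{equation*}
by comparing \eqref{eq.3.21} with the defining formula for the classical It\^o integral of a simple adapted process. Since both sequences of integrals converge in $L^2(\Omega,\mathcal{A},P) = \mathcal{H}$ and their terms are equal, the limits are equal, which is the claimed identity. The main obstacle I anticipate is not conceptual but a matter of care: one must make sure the same partition-based simple approximation $(F_n)$ is simultaneously good for both the classical It\^o construction (convergence in $L^2([0,T]\times\Omega)$) and the $S_2$-construction, and that the pointwise identity $\|A_{F_n(t)-F(t)}\|_{\mathcal{L}_N(t)} = \|F_n(t)-F(t)\|_{L^2}$ is legitimate for almost every $t$ despite $F(t)$ only being defined up to a $dt\times P$-null set; invoking the adaptedness of $F$ and the equality clause \eqref{eq.3.0.101} of Lemma~\ref{l.1} handles this, but it should be spelled out.
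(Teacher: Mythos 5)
Your proposal is correct and is precisely the argument the paper compresses into its one-sentence proof: reduce to simple adapted processes via Theorem~\ref{t.2} and Lemma~\ref{l.1}, observe that the two integrals coincide termwise there, and pass to the limit using the isometry \eqref{eq.3.0.101} to identify the $S_2$-convergence with $L^2([0,T]\times\Omega)$-convergence. You have merely spelled out the details the author leaves implicit, including the worthwhile observation that a single approximating sequence works simultaneously for both constructions.
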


\begin{proof}
        Taking into account  Theorem \ref{t.2},
        Lemma \ref{l.1} and the definitions of the integrals
        \begin{equation*}
             \int_{[0,T]}A_{F}(t)\,dN_t\quad\text{and}\quad
             \int_{[0,T]}F(t)\,dN_t,
        \end{equation*}
        it is sufficient to prove Theorem \ref{t.3} for simple functions
        $F\in L_a^2([0,T]\times\Omega)$.
        But in this case Theorem \ref{t.3} is obvious.
\end{proof}



\section{The It\^{o} integral in a Fock space as an $H$-stochastic integral}

Let us recall the definition of the It\^{o} integral in a Fock
space, see e.g. \cite{Attal} for more details. We denote by
$\mathcal{F}$ the symmetric Fock space over the real separable Hilbert
space $L^2([0,T]):=L^2([0,T],dt)$. By definition (see e.g.
\cite{BK88}),
\begin{equation*}
   {\mathcal F}:=\bigoplus_{n=0}^{\infty}{\mathcal F}_nn!,
\end{equation*}
where ${\mathcal F}_0:=\mathbb{C}$ and, for each $n\in\mathbb{N}$,
$
   {\mathcal F}_n:=(L_{{\mathbb C}}^2([0,T]))^{\mathbin{\widehat{\otimes}} n}
$ is an $n$-th symmetric tensor power $\mathbin{\widehat{\otimes}}$
of the complex Hilbert space $L_{\mathbb{C}}^2([0,T])$.  Thus, the
Fock space $\mathcal{F}$ is the complex Hilbert space of sequences
$f=(f_n)_{n=0}^{\infty}$ such that $f_n\in\mathcal{F}_n$ and
$$
   \|f\|_{{\mathcal F}}^{2}=\sum_{n=0}^{\infty}
   \|f_n\|_{{\mathcal F}_n}^{2}n!<\infty.
$$

We denote by
$
   L^2([0,T];\mathcal{F})
$ the Hilbert space of all $\mathcal{F}$-valued functions
$$
   [0,T]\ni t\mapsto f(t)\in\mathcal{F},\quad
   \|f\|_{L^2([0,T];\mathcal{F})}
   :=\Big(\int_{[0,T]}\|f(t)\|_{\mathcal{F}}^{2}\,dt\Big)^{\frac{1}{2}}<\infty
$$
with the corresponding scalar product.  A function
$f(\cdot)=(f_n(\cdot))_{n=0}^{\infty}\in L^2([0,T];\mathcal{F})$ is
called {\it It\^{o} integrable} if, for almost all $t\in[0,T]$,
$$
   f(t)=(f_0(t),\varkappa_{[0,t]}f_1(t),\ldots,\varkappa_{[0,t]^n}f_n(t),\ldots).
$$
We denote by $L_a^2([0,T];\mathcal{F})$ the set of all It\^{o} integrable functions.

Let $f$ belong to the space $L_{a,s}^2([0,T];\mathcal{F})$ of all
{\it simple It\^{o} integrable functions}. That is, $f$  belongs to
$L_a^2([0,T];\mathcal{F})$ and there exists a partition
$0=t_0<t_1<\cdots<t_n=T$ of $[0,T]$ such that
\begin{equation*}
      f(t)=\sum_{k=0}^{n-1}f_{(k)}\varkappa_{(t_k,t_{k+1}]}(t)\in\mathcal{F}
\end{equation*}
for almost all $t\in[0,T]$. The {\it It\^{o} integral} ${\mathbb
I}(f)$ of  such a function $f$ is defined by the formula
\begin{equation*}
   {\mathbb I}(f)
   :=\sum_{k=0}^{n-1}f_{(k)}\lozenge (0,\varkappa_{(t_k,t_{k+1}]},0,0,\ldots)\in\mathcal{F},
\end{equation*}
where the symbol $\lozenge$ denotes the Wick product in the  Fock
space $\mathcal{F}$. Let us recall that for given
$f=(f_n)_{n=0}^{\infty}$ and $g=(g_n)_{n=0}^{\infty}$ from
$\mathcal{F}$ the Wick product $f\lozenge g$ is defined by
\begin{equation*}
   f\lozenge g:=\Big(\sum_{m=0}^{n}f_m\mathbin{\widehat{\otimes}}g_{n-m}\Big)_{n=0}^{\infty},
\end{equation*}
provided the latter sequence belongs to the Fock space  $\mathcal{F}$.

The It\^{o} integral ${\mathbb I}(f)$  of a simple function $f\in
L_{a,s}^2([0,T];\mathcal{F})$ has the isometry property
$$
   \big\|{\mathbb I}(f)\big\|_{\mathcal{F}}^{2}
   =\int_{[0,T]}\big\|f(t)\big\|_{\mathcal{F}}^{2}\,dt,
$$
see e.g. \cite{Attal, ABT07}. Hence, extending the mapping
$$
   L_a^2([0,T];\mathcal{F})\supset L_{a,s}^2([0,T];\mathcal{F})\ni f\mapsto
   {\mathbb I}(f)\in \mathcal{F}
$$
by continuity we obtain a definition of the It\^{o} integral
${\mathbb I}(f)$ for each $f\in L_a^2([0,T];\mathcal{F})$ (we keep
the same notation ${\mathbb I}$ for the extension).

Let us show that the It\^{o} integral ${\mathbb I}(f)$  of $f\in
L_a^2([0,T];\mathcal{F})$  can be considered as an $H$-stochastic
integral. To do this  we set $
   \mathcal{H}:=\mathcal{F}
$
and consider in this space the resolution of identity
$$
   [0,T]\ni t\mapsto \mathcal{X}_tf
   :=(f_0,\varkappa_{[0,t]}f_1,\ldots,\varkappa_{[0,t]^n}f_n,\ldots)
   \in\mathcal{L}(\mathcal{F}),
   \quad f=(f_n)_{n=0}^{\infty}\in\mathcal{F}.
$$
Let $
   Z:=(0,1,0,0,\ldots)\in\mathcal{F}
$
and
$$
   [0,T]\ni t\mapsto Z_t:=\mathcal{X}_tZ=
   (0,\varkappa_{[0,t]},0,0,\ldots)\in\mathcal{F}
$$
be the corresponding  abstract martingale in the Fock space $\mathcal{F}$.
Note that now
\begin{equation*}
   \mu([0,t]):=\|Z_t\|_{\mathcal{F}}^{2}
   =\|\varkappa_{[0,t]}\|_{L_{\mathbb{C}}^2([0,T])}^{2}
   =t,\quad t\in[0,T],
\end{equation*}
i.e., $\mu$ is the Lebesgue measure on $\mathcal{B}([0,T])$.

We have the following analogues of Theorems \ref{t.2} and~\ref{t.3}. 

\enlargethispage{1.2cm}

\begin{thm}\label{t.4}
      A function $f\in L^2([0,T];\mathcal{F})$ belongs to the space  $L_{a}^2([0,T];\mathcal{F})$
      if and only if the
      corresponding operator-valued function $[0,T]\ni t\mapsto A_{f}(t)$
      whose values are operators $A_{f}(t)$ of Wick multiplication by $f(t)\in\mathcal{F}$
      in the Fock space $\mathcal{F}$,
      $$
         \mathcal{F}\supset \mathop{\rm Dom}(A_{f}(t))\ni g\mapsto A_{f}(t)g
         :=f(t)\lozenge g\in\mathcal{F},
      $$
      belongs to the space $S_2$.
\end{thm}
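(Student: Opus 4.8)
The plan is to mimic the proof of Lemma~\ref{l.1} and Theorems~\ref{t.2}, \ref{t.3}, reducing everything to an analysis of the operator $A_f$ of Wick multiplication by a single vector $f\in\mathcal F$ and its interaction with the resolution of identity $\mathcal X$. First I would establish the pointwise analogue of Lemma~\ref{l.1}: for a fixed $t\in[0,T)$ and a fixed vector $f=(f_n)_{n=0}^\infty\in\mathcal F$, the operator $A_f\colon g\mapsto f\lozenge g$ is $\mathcal L_Z(t)$-measurable if and only if $f$ is ``adapted up to time $t$'', i.e.\ $\mathcal X_t f = f$, equivalently $f_n=\varkappa_{[0,t]^n}f_n$ for all $n$; and in that case $\|A_f\|_{\mathcal L_Z(s)}=\|f\|_{\mathcal F}$ for all $s\in[t,T)$. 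The forward direction should go exactly as in Lemma~\ref{l.1}: the space $\mathcal H_Z(t)=\operatorname{span}\{Z_{s_2}-Z_{s_1}\mid (s_1,s_2]\subset(t,T]\}$ consists of vectors of the form $(0,h,0,0,\dots)$ with $h\in L^2_{\mathbb C}([0,T])$ supported in $(t,T]$, so for such $g$ we have $A_f g = f\lozenge g = (0,\,f_0 h,\,f_1\widehat\otimes h,\dots)$ and a direct Fock-norm computation using that the supports $[0,t]^n$ and the support of $h$ overlap ``transversally'' (a symmetrized $L^2$-tensor computation) yields $\|A_f g\|_{\mathcal F}^2=\|f\|_{\mathcal F}^2\|g\|_{\mathcal F}^2$; this gives both $A_f\in\mathcal L_Z(t)$ and the norm equality (i). Partial commutation (ii), $A_f\mathcal X_s g=\mathcal X_s A_f g$ for $g\in\mathcal H_Z(t)$, $s\in[t,T]$, follows because Wick multiplication by a $[0,t]$-supported vector commutes with the truncation projector $\mathcal X_s$ when applied to $(t,T]$-supported vectors. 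The converse direction mirrors Lemma~\ref{l.1}: from $A_f\mathcal X_s g=\mathcal X_s A_f g$ applied to $g=Z_{s_2}-Z_{s_1}$ with $(s_1,s_2]\subset(t,s]$ one reads off that $\mathcal X_s f\lozenge g = f\lozenge g$, and varying $s_1,s_2$ forces $\mathcal X_s f=f$ for $s\in(t,T]$, hence for $s=t$ by right continuity.

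Next I would pass from a single vector to the operator-valued function. Given $f\in L^2([0,T];\mathcal F)$ with $f(t)=(f_n(t))_{n=0}^\infty$, the family $\{A_f(t)\}_{t\in[0,T]}$ is, for each $t$, the Wick-multiplication operator by $f(t)$, and by the lemma above $A_f(t)$ is $\mathcal L_Z(t)$-measurable for a.e.\ $t$ precisely when $\mathcal X_t f(t)=f(t)$ for a.e.\ $t$, which is exactly the It\^o-integrability condition defining $L^2_a([0,T];\mathcal F)$. Moreover, under that condition,
\[
   \int_{[0,T]}\|A_f(t)\|_{\mathcal L_Z(t)}^2\,d\mu(t)
   =\int_{[0,T]}\|f(t)\|_{\mathcal F}^2\,dt
   =\|f\|_{L^2([0,T];\mathcal F)}^2,
\]
using the norm equality from the lemma and that $\mu$ is Lebesgue measure. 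Thus $A_f\in S_2$ is equivalent to $f\in L^2_a([0,T];\mathcal F)$, modulo one point that needs care: membership in $S_2$ also requires approximability in $\|\cdot\|_{S_2}$ by \emph{simple} $\mathcal L_Z$-adapted operator-valued functions. For the ``if'' direction I would approximate an arbitrary $f\in L^2_a([0,T];\mathcal F)$ by simple It\^o-integrable functions $f^{(m)}\in L^2_{a,s}([0,T];\mathcal F)$ in the $L^2([0,T];\mathcal F)$-norm (such approximants exist by the very construction of $\mathbb I$ on $L^2_a$), observe that each $A_{f^{(m)}}$ is a simple $\mathcal L_Z$-adapted operator-valued function in the sense of \eqref{eq.3.1}, and note that the displayed identity gives $\|A_f - A_{f^{(m)}}\|_{S_2}=\|f-f^{(m)}\|_{L^2([0,T];\mathcal F)}\to0$. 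For the ``only if'' direction, if $A_f\in S_2$ then each $A_f(t)$ is $\mathcal L_Z(t)$-measurable for a.e.\ $t$ (this is part of being a limit, in the relevant quotient/completion, of simple $\mathcal L_Z$-adapted functions), so the lemma forces $\mathcal X_t f(t)=f(t)$ a.e., i.e.\ $f\in L^2_a([0,T];\mathcal F)$.

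The main obstacle I anticipate is the ``transversal tensor'' computation underlying the isometry $\|f\lozenge g\|_{\mathcal F}=\|f\|_{\mathcal F}\|g\|_{\mathcal F}$ for $g$ of the form $(0,h,0,\dots)$ with $h$ supported off the supports of the $f_n$: one must show that the symmetrization in $f_n\widehat\otimes h$ does not create cross terms that spoil orthogonality across different levels of the Fock space, which is the Fock-space shadow of the independence/vanishing-covariance computation $\mathbb E[F^2(N_{s_2}-N_{s_1})^2]=\mathbb E[F^2]\,\mathbb E[(N_{s_2}-N_{s_1})^2]$ used in Lemma~\ref{l.1}. The clean way to handle it is to work with the ``chaos by support'' decomposition: since $L^2_{\mathbb C}([0,T])=L^2_{\mathbb C}([0,t])\oplus L^2_{\mathbb C}((t,T])$ one gets an isometric tensor factorization $\mathcal F\cong \mathcal F_{[0,t]}\otimes\mathcal F_{(t,T]}$ under which the Wick product factors and the claim becomes the elementary fact that $\|f\otimes(0,h,0,\dots)\|=\|f\|\cdot\|h\|$ when $f\in\mathcal F_{[0,t]}$ and $h\in L^2_{\mathbb C}((t,T])$; the adaptedness condition $\mathcal X_tf=f$ is exactly $f\in\mathcal F_{[0,t]}$. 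All remaining steps (linearity of the integral, the reduction to simple functions, right continuity of $\mathcal X$) are routine and parallel to Section~\ref{s.3}.
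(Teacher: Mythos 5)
The paper contains no proof of Theorem~\ref{t.4} to compare against --- the results of that section are explicitly deferred to a forthcoming publication --- so your argument has to be judged on its own, and on those terms it is correct and is surely the intended route: reduce to a pointwise analogue of Lemma~\ref{l.1} for the Wick multiplication operator $A_f$, then handle membership in $S_2$ by approximation. The core computation checks out: for $g=(0,h,0,\dots)\in\mathcal H_Z(t)$ with $h$ supported in $(t,T]$ and $\mathcal X_tf=f$, one has $(f\lozenge g)_n=f_{n-1}\mathbin{\widehat{\otimes}}h$, and since exactly one argument of each of the $n$ summands of the symmetrization lies in $(t,T]$, the summands have pairwise disjoint supports, giving $\|f_{n-1}\mathbin{\widehat{\otimes}}h\|_{\mathcal F_n}^{2}=\tfrac1n\|f_{n-1}\|^{2}\|h\|^{2}$ and hence $\|f\lozenge g\|_{\mathcal F}=\|f\|_{\mathcal F}\|g\|_{\mathcal F}$ after multiplying by $n!$ and summing; your factorization $\mathcal F\cong\mathcal F_{[0,t]}\otimes\mathcal F_{(t,T]}$ is the clean way to package this, and it simultaneously yields condition (i) with the constant norm $\|f\|_{\mathcal F}$ and condition (ii). Two places need one more line each. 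First, in the converse of your lemma the commutation relation gives $(f-\mathcal X_sf)\lozenge(Z_{s_2}-Z_{s_1})=0$, and ``varying $s_1,s_2$'' does not by itself force $\mathcal X_sf=f$: you need injectivity of Wick multiplication by a nonzero first-chaos vector $(0,h,0,\dots)$, which holds (decompose $L^2_{\mathbb C}([0,T])=\mathbb Ch\oplus h^{\perp}$ and grade symmetric tensors by their degree in $h$; the map $v\mapsto v\mathbin{\widehat{\otimes}}h$ shifts the grading and is injective on each piece), playing the role of the division by $G$ in the paper's proof of Lemma~\ref{l.1}. Second, in the ``only if'' direction the claim that an $S_2$-limit of simple $\mathcal L_Z$-adapted functions is $\mathcal L_Z(t)$-measurable for a.e.\ $t$ deserves justification: pass to a subsequence with $\|A_n(t)-A(t)\|_{\mathcal L_Z(t)}\to0$ a.e.\ and verify that (i) and (ii) survive such limits, using that $\mathcal X_s$ maps $\mathcal H_Z(t)$ into itself for $s\in[t,T]$. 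With these two points made explicit your proof is complete.
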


\begin{thm}\label{t.4.1}
       Let $f\in L_{a}^2([0,T];\mathcal{F})$ and $\{A_{f}(t)\}_{t\in[0,T]}$
       be the corresponding family of the operators $A_{f}(t)$ of Wick  multiplication
       by $f(t)\in \mathcal{F}$ in the Fock space $\mathcal{F}$. Then
       \begin{equation*}
             {\mathbb I}(f)=\int_{[0,T]}A_{f}(t)\,dZ_t.
       \end{equation*}
\end{thm}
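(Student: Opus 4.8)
The plan is to follow the pattern of the proof of Theorem~\ref{t.3}, with operators of Wick multiplication in the role of operators of multiplication. That proof rests on two inputs: Theorem~\ref{t.2}, which makes $H$-stochastic integrability of the integrand equivalent to membership of the associated operator-valued function in $S_2$, and the norm identity of Lemma~\ref{l.1}. Here the first input is exactly Theorem~\ref{t.4}, which we may assume, so that $\mathbb{I}(f)$ and $\int_{[0,T]}A_f(t)\,dZ_t$ are meaningful simultaneously. The counterpart of the second input is the identity
\begin{equation*}
   \|A_f(t)\|_{\mathcal{L}_{Z}(t)}=\|A_f(t)\|_{\mathcal{L}_{Z}(s)}=\|f(t)\|_{\mathcal{F}},
   \qquad s\in[t,T),
\end{equation*}
valid for almost all $t\in[0,T]$ whenever $f\in L_a^2([0,T];\mathcal{F})$, which I would establish first.

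To prove it, recall that $\mathcal{H}_{Z}(s)$ is the linear span of the vectors $(0,\varphi,0,0,\ldots)$ with $\varphi$ a step function supported in $(s,T]$, whereas for an It\^{o} integrable $f$ the $n$-particle component $f_n(t)$ of $f(t)$ is supported, in each of its arguments, in $[0,t]$. Since $(0,\varphi,0,0,\ldots)$ has vanishing scalar component, the Wick product collapses to $f(t)\lozenge(0,\varphi,0,0,\ldots)=(f_{n-1}(t)\mathbin{\widehat{\otimes}}\varphi)_{n=0}^{\infty}$, and because the time-supports of the $f_n(t)$ and of $\varphi$ are disjoint one obtains the factorization $\|f(t)\lozenge(0,\varphi,0,0,\ldots)\|_{\mathcal{F}}^{2}=\|f(t)\|_{\mathcal{F}}^{2}\,\|(0,\varphi,0,0,\ldots)\|_{\mathcal{F}}^{2}$ --- the same factorization that underlies the isometry property of $\mathbb{I}$ recalled above. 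Taking the supremum over nonzero $g=(0,\varphi,0,0,\ldots)\in\mathcal{H}_{Z}(s)$ then gives the displayed identity; in particular its value does not depend on $s\in[t,T)$, so clause~(i) of $\mathcal{L}_{Z}(t)$-measurability holds, and, inspecting the proof of Theorem~\ref{t.1}, one sees that inequality~(\ref{eq.3.3}) becomes an equality for the functions $A_f$, i.e.\ $A\mapsto\int_{[0,T]}A(t)\,dZ_t$ is isometric on this class. The remaining verification --- clause~(ii), that is $A_{f(t)}\mathcal{X}_s g=\mathcal{X}_s A_{f(t)}g$ for $g\in\mathcal{H}_{Z}(t)$, which comes down to the observation that the first $n-1$ variables in each summand of $f_{n-1}(t)\mathbin{\widehat{\otimes}}\varphi$ already range over $[0,t]\subset[0,s]$ --- is part of Theorem~\ref{t.4}.

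With these facts at hand, both $f\mapsto\mathbb{I}(f)$ and $f\mapsto\int_{[0,T]}A_f(t)\,dZ_t$ are continuous (indeed isometric) maps of $L_a^2([0,T];\mathcal{F})$ into $\mathcal{F}$, and they agree on the dense subset $L_{a,s}^2([0,T];\mathcal{F})$ of simple It\^{o} integrable functions: if $f(t)=\sum_{k=0}^{n-1}f_{(k)}\varkappa_{(t_k,t_{k+1}]}(t)$, then $A_f(t)=\sum_{k=0}^{n-1}A_{f_{(k)}}\varkappa_{(t_k,t_{k+1}]}(t)$ with $A_{f_{(k)}}$ the operator of Wick multiplication by $f_{(k)}$, and since $Z_{t_{k+1}}-Z_{t_k}=(0,\varkappa_{(t_k,t_{k+1}]},0,0,\ldots)$, formula~(\ref{eq.3.21}) gives
\begin{equation*}
   \int_{[0,T]}A_f(t)\,dZ_t
   =\sum_{k=0}^{n-1}A_{f_{(k)}}(Z_{t_{k+1}}-Z_{t_k})
   =\sum_{k=0}^{n-1}f_{(k)}\lozenge(0,\varkappa_{(t_k,t_{k+1}]},0,0,\ldots)
   =\mathbb{I}(f),
\end{equation*}
the last equality being the definition of $\mathbb{I}$. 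Since a continuous function on a Banach space is determined by its values on a dense set, the two maps coincide on all of $L_a^2([0,T];\mathcal{F})$, which is the assertion.

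The genuinely substantial part is not this final identity --- on simple functions it is merely a matching of definitions, and the passage from simple functions to general $f$ is routine --- but rather Theorem~\ref{t.4} together with the norm identity above: isolating the disjoint-time-support factorization of the Wick product as the Fock-space substitute for the conditional-expectation computations of Lemma~\ref{l.1}, and deducing from it both clauses of $\mathcal{L}_{Z}(t)$-measurability of the operators $A_{f(t)}$. Once that is in place, Theorem~\ref{t.4.1} follows from Theorem~\ref{t.4} exactly as Theorem~\ref{t.3} follows from Lemma~\ref{l.1} and Theorem~\ref{t.2}.
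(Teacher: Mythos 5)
Your argument is sound, and there is in fact nothing in the paper to compare it against: the author explicitly states that the results of the Fock-space sections (Theorems \ref{t.4} and \ref{t.4.1}) are given without proofs, deferred to a forthcoming publication. Your proof supplies the missing argument along the only natural route, namely the exact analogue of the chain Lemma \ref{l.1} $\Rightarrow$ Theorem \ref{t.2} $\Rightarrow$ Theorem \ref{t.3}. The one computation that genuinely needs checking is the factorization $\|f(t)\lozenge(0,\varphi,0,\ldots)\|_{\mathcal{F}}^{2}=\|f(t)\|_{\mathcal{F}}^{2}\|\varphi\|_{L^2}^{2}$ for $\varphi$ supported in $(t,T]$, and it works exactly as you indicate: the $n$ summands of the symmetrization $f_{n-1}(t)\mathbin{\widehat{\otimes}}\varphi$ have pairwise disjoint supports, so $\|f_{n-1}(t)\mathbin{\widehat{\otimes}}\varphi\|^{2}=\tfrac{1}{n}\|f_{n-1}(t)\|^{2}\|\varphi\|^{2}$, and the factor $n!$ in the Fock norm turns $\tfrac{1}{n}\cdot n!$ into $(n-1)!$, giving the claimed identity after summing over $n$; this simultaneously yields clause (i) of $\mathcal{L}_{Z}(t)$-measurability, the equality case of (\ref{eq.3.3}), and hence the isometry of both maps. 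The identification on simple functions is, as you say, a direct match of formula (\ref{eq.3.21}) with the definition of $\mathbb{I}$, since $Z_{t_{k+1}}-Z_{t_k}=(0,\varkappa_{(t_k,t_{k+1}]},0,\ldots)$, and the density of $L_{a,s}^2([0,T];\mathcal{F})$ in $L_a^2([0,T];\mathcal{F})$ is already presupposed by the paper's definition of $\mathbb{I}$ by continuous extension. The only caveat is that you lean on Theorem \ref{t.4} for clause (ii), which the paper also leaves unproved; your parenthetical sketch (each summand of $f_{n-1}(t)\mathbin{\widehat{\otimes}}\varphi$ already has its $f$-variables confined to $[0,t]\subset[0,s]$, so cutting off by $\varkappa_{[0,s]^n}$ only acts on the $\varphi$-variable) is correct and would close that gap too.
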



Taking into account Theorem \ref{t.4.1}, in what follows  we will
denote the It\^{o} integral ${\mathbb I}(f)$ of $f\in
L_{a}^2([0,T];\mathcal{F})$ by $\int_{[0,T]}f(t)\,dZ_t$. Note that
this integral can be expressed in terms of the Fock space
$\mathcal{F}$. Namely, for any
$f(\cdot)=(f_n(\cdot))_{n=0}^{\infty}\in L_{a}^2([0,T];\mathcal{F})$,
we have 
      \begin{equation}\label{eq.5.1}
            \int_{[0,T]}f(t)\,dZ_t=
            (0,\hat{f}_1,\ldots,\hat{f}_n,\ldots)\in {\mathcal F},
      \end{equation}
      where, for each $n\in\mathbb{N}$ and almost all $(t_1,\ldots,t_n)\in [0,T]^n$,
      $$
        \hat{f}_n(t_1,\ldots,t_n):=\frac{1}{n}
         \sum_{k=1}^{n}f_{n-1}(t_k;t_1,\ldots,t_k\!\!\!\!\!\!\diagup\,\,,\ldots,t_n),
      $$
      i.e.,
      $\hat{f}_n$ is the symmetrization of
      $f_{n-1}(t;t_1,\ldots,t_{n-1})$ with respect to $n$ variables.



\section{A connection between the classical It\^{o} integral\\
  and the It\^{o} integral in the Fock space}

As before, let
$
   (\Omega, \mathcal{A}, P)
$ be a complete probability space with a right continuous filtration
$\{\mathcal{A}_t\}_{t \in [0,T]}$,  $\mathcal{A}_0$ be the trivial
$\sigma$-algebra containing all $P$-null sets of $\mathcal{A}$ and
$\mathcal{A}=\mathcal{A}_T$.\pagebreak

\noindent 
Let $N=\{N_t\}_{t\in[0,T]}$ be a normal martingale on $(\Omega,
\mathcal{A}, P)$ with respect to $\{\mathcal{A}_t\}_{t \in [0,T]}$,
$N_0=0$. It is known that the mapping
$$
   \mathcal{F}\ni f=(f_n)_{n=0}^{\infty}\mapsto
   If:=\sum_{n=0}^{\infty}I_n(f_n)\in
   L^2(\Omega,\mathcal{A},P)
$$
is well-defined and isometric. Here $I_0(f_0):=f_0$ and, for each
$n\in\mathbb{N}$,
$$
   I_n(f_n):=n!\int_{0}^{T}\int_{0}^{t_n}
      \cdots\Big(\int_{0}^{t_2}f_n(t_1,\ldots,t_n)\,
      dN_{t_1}\Big)\ldots \,dN_{t_{n-1}}\,dN_{t_n}
$$
is an $n$-iterated It\^{o}  integral with respect to $N$. We suppose
that the normal martingale $N$ has the {\it chaotic representation
property} (CRP). In other words, we assume that the mapping
$I:\mathcal{F}\to L^2(\Omega,\mathcal{A},P)$ is a unitary. Note that
$$
   N_t=IZ_t\in L^2(\Omega,\mathcal{A},P),
  \quad t\in[0,T],
$$
i.e., $N$ is the $I$-image of the abstract martingale $[0,T]\ni
t\mapsto Z_t= (0,\varkappa_{[0,t]},0,0,\ldots)\in\mathcal{F}.$

The Brownian motion, the compensated Poisson process and some
Az${\rm \acute{e}}$ma martingales are examples of normal martingales
which possess the CRP, see e.g. \cite{Emery89, MPM98}.

We note that the spaces $L^2([0,T]\times\Omega)$ and
$L^2([0,T];\mathcal{F})$ can be understood as the tensor products
$L^2([0,T])\otimes L^2(\Omega,\mathcal{A},P)$ and $L^2([0,T])\otimes
\mathcal{F}$, respectively. Therefore,
$$
   1\otimes I:L^2([0,T];\mathcal{F})\to L^2([0,T]\times\Omega)
$$
is a unitary operator.

\enlargethispage{1cm}
The next result gives a relationship between the classical It\^{o}
integral with respect to the normal martingale with CRP and the
It\^{o} integral in the Fock space $\mathcal{F}$.

\begin{thm}\label{t.7.1}
      We have
      $$
         L_a^2([0,T]\times\Omega)=(1\otimes I)L_a^2([0,T];\mathcal{F})
      $$
      and, for arbitrary $f\in L_{a}^2([0,T];\mathcal{F})$,
\begin{equation*}
      I\Big(\int_{[0,T]}f(t)\,dZ_t\Big)
      =\int_{[0,T]}If(t)\,dN_t.
\end{equation*}
\end{thm}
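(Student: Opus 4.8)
The plan is to prove the two assertions separately, with the second following quickly once the framework is in place. For the first assertion, $L_a^2([0,T]\times\Omega)=(1\otimes I)L_a^2([0,T];\mathcal{F})$, I would argue at the level of the chaos decomposition. A function $f(\cdot)=(f_n(\cdot))_{n=0}^\infty\in L^2([0,T];\mathcal{F})$ lies in $L_a^2([0,T];\mathcal{F})$ exactly when, for a.e.\ $t$, each component satisfies $f_n(t)=\varkappa_{[0,t]^n}f_n(t)$; equivalently $\mathcal{X}_t f(t)=f(t)$ a.e., i.e.\ $f(t)\in\mathcal{F}_{[0,t]}$, the Fock space over $L^2_{\mathbb C}([0,t])$. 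On the probabilistic side, $(1\otimes I)f$ is the process $t\mapsto If(t)$, and by the CRP together with the standard identification $I(\mathcal{F}_{[0,t]})=L^2(\Omega,\mathcal{A}_t,P)$ (which records that the $n$-iterated It\^o integral of a kernel supported in $[0,t]^n$ is $\mathcal{A}_t$-measurable, and conversely every $\mathcal{A}_t$-measurable $L^2$ function has chaos kernels supported there), we get $If(t)\in L^2(\Omega,\mathcal{A}_t,P)$ for a.e.\ $t$ if and only if $f(t)\in\mathcal{F}_{[0,t]}$ for a.e.\ $t$. Since $1\otimes I$ is a unitary between $L^2([0,T];\mathcal{F})$ and $L^2([0,T]\times\Omega)$, this equivalence is precisely the claimed equality of the adapted subspaces.

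For the second assertion I would first reduce to simple functions. Both sides are continuous in $f$: the left-hand side because $I$ is an isometry and $f\mapsto\int f\,dZ_t$ is bounded (indeed isometric into $\mathcal{F}$, by the isometry property of $\mathbb{I}$ recalled in Section~4), and the right-hand side because $g\mapsto\int g\,dN_t$ is the classical It\^o isometry composed with the isometry $1\otimes I$. Since $L_{a,s}^2([0,T];\mathcal{F})$ is dense in $L_a^2([0,T];\mathcal{F})$, it suffices to verify the identity for $f(t)=\sum_{k=0}^{n-1}f_{(k)}\varkappa_{(t_k,t_{k+1}]}(t)$ with each $f_{(k)}\in\mathcal{F}_{[0,t_k]}$. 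For such $f$, by definition $\mathbb{I}(f)=\int_{[0,T]}f(t)\,dZ_t=\sum_k f_{(k)}\lozenge(0,\varkappa_{(t_k,t_{k+1}]},0,\ldots)$, so applying $I$ gives $\sum_k I\big(f_{(k)}\lozenge(0,\varkappa_{(t_k,t_{k+1}]},0,\ldots)\big)$. On the other side, $If(t)=\sum_k (If_{(k)})\varkappa_{(t_k,t_{k+1}]}(t)$ is a simple adapted process whose classical It\^o integral is $\sum_k (If_{(k)})(N_{t_{k+1}}-N_{t_k})$, and $N_{t_{k+1}}-N_{t_k}=I\big((0,\varkappa_{(t_k,t_{k+1}]},0,\ldots)\big)$. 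So the whole statement collapses to the single algebraic fact that, for $h\in\mathcal{F}_{[0,s]}$ and $s\le u\le v$,
\begin{equation*}
   I\big(h\lozenge(0,\varkappa_{(u,v]},0,\ldots)\big)=(Ih)\cdot I\big((0,\varkappa_{(u,v]},0,\ldots)\big)=(Ih)(N_v-N_u).
\end{equation*}

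The main obstacle is therefore establishing this last identity: that the unitary $I$ intertwines the Wick product by a ``future increment'' vector with ordinary multiplication by the corresponding martingale increment. I would justify it by expanding $h\lozenge(0,\varkappa_{(u,v]},0,\ldots)$ componentwise, $h=(h_m)_{m\ge0}$ with $h_m$ supported in $[0,s]^m$, so the $(m+1)$-st component is the symmetrization of $h_m\mathbin{\widehat{\otimes}}\varkappa_{(u,v]}$; applying $I_{m+1}$ and using that a symmetrized iterated It\^o integral equals the iterated It\^o integral of the (unsymmetrized) kernel, one peels off the outermost $dN$-integration over $(u,v]$ — legitimate because $h_m$ is supported in times $\le s\le u$, so the inner integral $I_m(h_m)$ is $\mathcal{A}_u$-measurable and comes outside the stochastic integral $\int_u^v\cdots dN_r=N_v-N_u$. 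Summing over $m$ yields $(Ih)(N_v-N_u)$. This is exactly the place where the iterated-integral structure of $I$ and the normal-martingale property are used; the rest is bookkeeping. (As the excerpt notes, full details are deferred to a forthcoming paper, so here I would present the reduction to simple functions and the key intertwining identity, indicating the iterated-integral computation rather than carrying out every index manipulation.)
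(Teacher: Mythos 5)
The paper itself states Theorem~\ref{t.7.1} without proof (the introduction defers all proofs of Sections~4--5 to a forthcoming publication), so there is no in-paper argument to compare against; judged on its own, your outline is correct and is the natural route. Two remarks. First, both halves of your argument ultimately rest on the single intertwining relation $E_tI=I\mathcal{X}_t$ (equivalently $I(\mathcal{F}_{[0,t]})=L^2(\Omega,\mathcal{A}_t,P)$): the inclusion $I(\mathcal{F}_{[0,t]})\subset L^2(\Omega,\mathcal{A}_t,P)$ is immediate from the $\mathcal{A}_t$-measurability of iterated integrals with kernels supported in $[0,t]^n$, but the reverse inclusion is exactly where CRP plus the martingale property $\mathbb{E}[I_n(f_n)|\mathcal{A}_t]=I_n(\varkappa_{[0,t]^n}f_n)$ enters, and it would be worth isolating this as a lemma since it also powers your key identity $I\big(h\lozenge(0,\varkappa_{(u,v]},0,\ldots)\big)=(Ih)(N_v-N_u)$. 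Second, that key identity can be packaged using the paper's own machinery: by Theorem~\ref{t.4.1} one has $\mathbb{I}(f)=\int_{[0,T]}A_f(t)\,dZ_t$, and the unitary-invariance property of the $H$-stochastic integral stated at the end of Section~2 (with $U=I$, $N_t=IZ_t$) gives $I\big(\int A_f(t)\,dZ_t\big)=\int IA_f(t)I^{-1}\,dN_t$; the remaining step, $IA_f(t)I^{-1}=A_{If(t)}$ on $\mathcal{H}_N(t)$, is again your intertwining of Wick multiplication by a future increment with ordinary multiplication by the martingale increment. So your componentwise computation on the ordered simplex is exactly the irreducible core of the proof, and your combinatorics there (only the term in which the $(u,v]$-variable is the largest survives, yielding $I_m(h_m)(N_v-N_u)$ after pulling the $\mathcal{A}_u$-measurable inner integral out) checks out.
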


Since $N$ has CRP, for any $F\in L_a^2([0,T]\times\Omega)$ there
exists a uniquely defined vector
$f(\cdot)=(f_n(\cdot))_{n=0}^{\infty}\in L_{a}^2([0,T];\mathcal{F})$
such that
      $$
         F(t)=If(t)=\sum_{n=0}^{\infty}I_n(f_n(t))
      $$
for almost all $t\in[0,T]$. Hence, using Theorem \ref{t.7.1} and
equality (\ref{eq.5.1}) we obtain
      \begin{equation*}
            \int_{[0,T]}F(t)\,dN_t=I\Big(\int_{[0,T]}f(t)\,dZ_t\Big)
            =\sum_{n=1}^{\infty}I_n(\hat{f}_n)\in L^2(\Omega, \mathcal{A}, P).
      \end{equation*}

It should be noticed that the right hand side of the latter equality
was used by Hitsuda~\cite{Hitsuda} and Skorohod~\cite{Skorohod} to
define an extension of the It\^{o} integral. Namely, a function
      $$
         F(\cdot)=\sum_{n=0}^{\infty}I_n(f_n(\cdot))\in L^2([0,T]\times\Omega)
      $$
is  Hitsuda-Skorohod integrable if and only if
$$
   \sum_{n=1}^{\infty}I_n(\hat{f}_n)\in L^2(\Omega, \mathcal{A}, P)
   \quad \text{or, equivalently,}\quad
   \sum_{n=1}^{\infty}\|\hat{f}_n\|_{\mathcal F_n}^{2}n!<\infty.
$$
The corresponding {\it Hitsuda-Skorohod integral}  ${\mathbb
I}_{\mathop{\rm {HS}}}(F)$ of $F$ is defined by the formula
      \begin{equation*}
            {\mathbb I}_{\mathop{\rm {HS}}}(F):=\sum_{n=1}^{\infty}I_n(\hat{f}_n).
      \end{equation*}


{\it{Acknowledgments.}} I am grateful to Prof. Yu.~M.~Berezansky
for the statement of a question and helpful discussions and the
referee for the useful remarks. This work has been partially
supported by the DFG, Project 436 UKR 113/78/0-1 and by the
Scientific Program of National Academy of Sciences of Ukraine,
Project No~0107U002333.

\end{document}